\documentclass{birkjour}

\usepackage{amssymb}
\usepackage{bbm}
\usepackage{amsmath}
\usepackage{amsthm}
\usepackage{enumitem}
\usepackage{amsfonts}
\usepackage{hyperref}
\usepackage{comment}
\usepackage{xcolor}
\usepackage{cancel} 

 \newtheorem{thm}{Theorem}
 
 \newtheorem{lem}[thm]{Lemma}
 
 \newtheorem*{thm*}{Theorem}

 \theoremstyle{definition}
 \newtheorem{dfn}[thm]{Definition}
 \theoremstyle{remark}
 \newtheorem{rem}[thm]{Remark}
 
 \newtheorem{qst}{Question}
\newtheorem*{claim}{Claim}

\newtheorem{notation}[thm]{Notation}

\DeclareMathOperator{\CR}{RC}

\newcommand{\NN}{\mathbbm{N}}

\newcommand{\ip}[1]{\left\lfloor #1 \right\rfloor}
\newcommand{\fp}[1]{\left\{#1 \right\}}
\newcommand{\cR}[2][n]{\CR_{#1}(#2)}

\begin{document}

%
%
%
%
%
%
%
%
%

\title[A Note on Iterated Beatty Sequences]
 {A Note on Iterated Beatty Sequences}

\author[M. Khani]{Mohsen Khani}

\address{%
Department of Mathematical Sciences\\ 
Isfahan University of Technology\\ 
Isfahan \\
84156-83111, Iran}

\email{mohsen.khani@iut.ac.ir}

\author[A. N. Valizadeh]{Ali N. Valizadeh}
\address{
	Department of Mathematics and Statistics\\
	University of Isfahan\\
	Isfahan\\
	81746-73441, Iran}
	
\email{a.valizadeh@mcs.ui.ac.ir}

\author[A. Zarei]{Afshin Zarei}
\address{School of Mathematics\\ 
	Institute for Research in Fundamental Sciences 
	(IPM)\\ 
	Tehran\\
	P.O. Box: 19395-5746, Iran.}
\email{a.zarei@ipm.ir}

\thanks{The second author was in part supported by a grant from IPM (No. 1400030021). \\The third author was supported by a grant from IPM.}


\keywords{Beatty Sequences}

%
%
%


\begin{abstract}

 For any irrational number $ \alpha>\frac{3+\sqrt{5}}{2}\approx2.618$ and given a positive $ n\in\NN $, we use elementary number theory to introduce a necessary and sufficient condition for a natural number $ x $ to be in the $n$th iterate of the Beatty sequence of modulus $\alpha$. 
	
\end{abstract}

\maketitle

	\section{Introduction}

	For a given real number $ \alpha $, the Beatty sequence of modulus $ \alpha $ is the range of the function $ f_\alpha:\NN\longrightarrow\NN $ that maps each natural number $ x $ to the integer part of $\alpha x$, denoted by $ \ip{\alpha x} $. We will use $ \fp{\beta} $ to denote the fractional part of a real number $ \beta $, namely $ \fp{\beta}=\beta-\ip{\beta} $. Our main result is the following theorem: 

\begin{thm*}
	Suppose that $ n\geq 1 $, and $ \alpha $ is an irrational number strictly greater than $\frac{3+\sqrt{5}}{2} $. Then, a natural number $ x $ belongs to the 
	range of $ f^n $ if and only if $1-\frac{1}{\alpha}<\fp{\frac{x}{\alpha}}<1$ and for each $i=2, \ldots, n$ we have that
	\begin{align}\label{eq-CRn}
		1-\frac{2}{\alpha}<\fp{\frac{x}{\alpha^{i}}}-\frac{\fp{\frac{x}{\alpha^{i-1}}}}{\alpha}<1-\frac{1}{\alpha}.
	\end{align}
\end{thm*}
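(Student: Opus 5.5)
The plan is to induct on $n$, using the one-step characterization of the range of $f=f_\alpha$ together with an explicit formula for the unique preimage. Since $\alpha$ is irrational, $x=\ip{\alpha y}$ holds iff $\alpha y-1<x<\alpha y$, i.e. iff $y-\frac1\alpha<\frac{x}{\alpha}<y$. This forces $y=\ip{\frac x\alpha}+1$, and then $x$ lies in the range of $f$ iff $\fp{\frac x\alpha}>1-\frac1\alpha$. This settles $n=1$ and shows that the preimage is unique:
$$y=\frac{x}{\alpha}-\fp{\frac x\alpha}+1 .$$
Consequently $x\in\operatorname{range}(f^n)$ iff $x\in\operatorname{range}(f)$ and $y\in\operatorname{range}(f^{n-1})$. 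I will write $u_i(x)=\fp{\frac{x}{\alpha^i}}-\frac1\alpha\fp{\frac{x}{\alpha^{i-1}}}$; note that $u_i(x)\in(-\frac1\alpha,1)$ always.

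The first step is the case $i=2$. From the formula for $y$,
$$\frac{y}{\alpha}=\frac{x}{\alpha^2}+\frac{1-\fp{x/\alpha}}{\alpha},$$
so $\fp{\frac y\alpha}$ is the fractional part of $u_2(x)+\frac1\alpha\in(0,1+\frac1\alpha)$. If $u_2(x)+\frac1\alpha\ge 1$, this fractional part is below $\frac1\alpha<1-\frac1\alpha$ (using $\alpha>2$). Hence $\fp{\frac y\alpha}>1-\frac1\alpha$ iff $u_2(x)+\frac1\alpha\in(1-\frac1\alpha,1)$, i.e. iff \eqref{eq-CRn} holds for $i=2$. In that case we also get the exact identity
$$\fp{\frac y\alpha}=\fp{\frac{x}{\alpha^2}}+\frac{1-\fp{x/\alpha}}{\alpha}.$$

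For the inductive step, I want to show that, assuming $x\in\operatorname{range}(f)$ and the conditions for $x$ up to index $i$, we have $u_{i}(y)=u_{i+1}(x)$ exactly. Then the hypothesis applied to $y$ (conditions for $i=2,\dots,n-1$) translates precisely into the conditions for $x$ with $i=3,\dots,n$. The route is the exact identity
$$\fp{\frac{y}{\alpha^j}}=\fp{\frac{x}{\alpha^{j+1}}}+\frac{1-\fp{x/\alpha}}{\alpha^j}\qquad(\ast)$$
for the relevant $j$. Plugging $(\ast)$ for $j=i$ and $j=i-1$ into $u_i(y)$, the correction terms $\frac{1-\fp{x/\alpha}}{\alpha^i}$ cancel and leave $u_{i+1}(x)$.

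Even without $(\ast)$, one always has the congruence $u_i(y)\equiv u_{i+1}(x)\pmod 1$ once $(\ast)$ holds for $j=i-1$. Both sides lie in $(-\frac1\alpha,1)$, and the target interval $(1-\frac2\alpha,1-\frac1\alpha)$ has translates by $\pm1$ lying outside $(-\frac1\alpha,1)$ (the lower one ends exactly at $-\frac1\alpha$, the upper starts above $1$). So membership in the target interval is equivalent for the two sides, even if only the congruence is known.

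The main obstacle is establishing $(\ast)$ for each $j$, i.e. that $\fp{\frac{x}{\alpha^{j+1}}}+\frac{1-\fp{x/\alpha}}{\alpha^j}<1$. I would prove it by induction on $j$ alongside the main claim, using the already-verified condition for index $j+1$ to bound $\fp{x/\alpha^{j+1}}$ above by $1-\frac1\alpha+\frac1\alpha\fp{x/\alpha^{j}}$, and then iterating down to $\fp{x/\alpha}$. This iteration is where the hypothesis $\alpha>\frac{3+\sqrt5}{2}$ (equivalently $\alpha^2-3\alpha+1>0$) should be needed: it must make the accumulated geometric error fit below $1$. If $(\ast)$ fails at some stage, I will instead argue only with the congruence and the interval-separation observation above.
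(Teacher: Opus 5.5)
Your proof is correct, and it differs from the paper's exactly where the paper does its hardest work.

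\textbf{Where the two routes coincide.} Your identity $(\ast)$ at $j=n-1$ is the Claim inside Lemma~\ref{lmaTech}. Your derivation of $(\ast)_j$ from the conditions at indices $2,\dots,j+1$ is the paper's case (i) in cleaner form. With $S_j=\fp{x/\alpha^{j+1}}+(1-\fp{x/\alpha})/\alpha^{j}$, the upper bound at index $j+1$ gives $S_j<1-\frac1\alpha+\frac{S_{j-1}}{\alpha}$, and $S_0=1$, so $S_j<1$ for every $\alpha>1$. The geometric error telescopes exactly, so, contrary to your expectation, no hypothesis on $\alpha$ enters at this point.

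\textbf{Where they differ.} The paper must also prove the Claim from range membership alone (case (ii)). It does this via Lemma~\ref{lmaVn} and the inequalities $\alpha^2-\alpha-1>0$, $\alpha^4-2\alpha^3-\alpha+1>0$ and $\alpha>2$. It needs this because its forward direction wants the exact identity \eqref{eq-lmaTech} before $\cR{x}$ is known. You sidestep this. Going level by level, you only need $(\ast)$ at $j=i-1$, which the conditions already established supply. Then the congruence $u_i(y)\equiv u_{i+1}(x)\pmod 1$, together with the fact that no nonzero integer translate of $(1-\frac2\alpha,1-\frac1\alpha)$ meets $(-\frac1\alpha,1)$, transfers membership in both directions. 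That congruence step is not a fallback but the essential step in the forward direction, since exact equality via $(\ast)_i$ would presuppose the condition at index $i+1$.

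\textbf{What each route buys.} Yours is shorter and does not need Lemma~\ref{lmaVn}. It also needs a weaker hypothesis: $\alpha>2$ is used only in the $i=2$ step and in $2-\frac2\alpha>1$. So your argument proves the theorem for every irrational $\alpha>2$, answering the paper's first closing question affirmatively on $(2,\frac{3+\sqrt5}{2})$. The paper's route gives the exact equality \eqref{eq-lmaTech} directly under range membership, though your separation argument recovers that equality a posteriori as well.
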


	\subsection{A Brief Historical Background}
	Beatty sequences were named after Samuel Beatty's theorem in \cite{Beatty-Solutions}, which, in particular, states that the complement of a Beatty sequence with an irrational modulus $ \alpha $ is again a Beatty sequence whose modulus $ \beta $ is obtained by the equation $ \frac{1}{\alpha}+\frac{1}{\beta}=1 $. 
	
	Beatty sequences have been extensively studied from different perspectives including, but not limited to: Their connection to winning positions of a Wythoff game (see \cite{Connell-WythoffGame}), the impossibility for a Beatty sequence to contain an infinite arithmetic progression (see \cite{Connell-BeattySeqsI} and  \cite{Connell-BeattySeqsII}), disjoint covering systems (see \cite{Uspensky-OnProblemsRaisingFromGame}, \cite{Graham-UspenskyTheorem},  \cite{Fraenkel-TheBracketfunctionAndComplementarySetofIntegers}, \cite{Graham-CoveringPositiveIntegersbyDisjointSets},  \cite{EggletonFraenkelSimpson-BeattyLangfordSequences}, \cite{GinosarYona-AModelForPairsOfBeattySequences}), the way that primes are distributed over the Beatty sequences (see Theorem 9.9 in \cite{Ellison-LesNomresPremiers}, \cite{Harman-PrimesInIntersectionOfBeattySequences}, \cite{Harman-PrimesInBeattySequencesInShortIntervals} and \cite{SteudingTechnau-TheLeastPromeNumber}
), the asymptotic behaviour of the divisor function $ \tau $ on a Beatty sequence (see Theorems I and II in\cite{Abercrombie-BeattySequencesAndMultiplicative}), prime divisors of elements in a Beatty sequence (see \cite{BanksShparlinski-PrimeDivisorsInBeattySequences}), their connection to Sturmian words (see \cite{Stolarsky-BeattySequencesContinuedFractions} and \cite{HieronymiShalitEtAl-SturmianWords}) and finally their connection to semigroup invariants as appeared in \cite{PortaStolarsky-WythoffPairsAsSemigroupInvariants}. An informative yet concise history on Beatty sequences can be found in Section 3.1.2 of \cite{DiaconisZabell-InPraiseOfUspensky}.
	
	%
	Also, from a logical perspective and following Hieronymi's results in \cite{Hieronymi-Interpreting}, \cite{Hieronymi-ExpansionByTwoDiscrete} and \cite{Hieronymi-WhenScalar}, the logical and model theoretic impacts of augmenting the additive group of integers with a Beatty sequence was studied in \cite{KhaniZare-Wythoff}, \cite{KhaniValiZare-ModelCompletenessBeatty} and [27]\footnote{Unpublished reference: [27] M. Khani, A. N. Valizadeh and A. Zarei - “Fibonacci Numbers and Model-Complete Axiomatization of Presburger Arithmetic Expanded with a Beatty Sequence”, preprint (2026).}.

\subsection{Motivation} 

It is folklore that for any irrational $\beta$, a natural number $x$ is in the range of $f_\beta$ if and only if: 
\begin{align}\label{eqn-range-of-f}
	 1-\frac{1}{\beta }<\fp{\frac{x}{\beta }}<1.
\end{align}

In Theorem 1 of \cite{Fraenkel-IteratedBeatty}, it is proved that when $ \alpha $ is taken to be a non-zero algebraic number of degree less than or equal to $ n $, certain linear combinations with integer coefficients containing the iterates $ f^i_\alpha(x) $ (for $ 0\leq i\leq n $) reduce to a linear combination only containing $ f_\alpha(x) $ and $ x $, up to a bounded value of error. 
This theorem implies a range of interesting applications to the study of algebraic numbers (\cite{Fraenkel-IteratedBeatty}).

	%
	
	It is obvious that for two natural numbers $ x $ and $ y $ we have $ f_\alpha(x+y)=f_\alpha(x)+f_\alpha(y)+i $ where $ i $ is equal to $ 0 $ or $ 1 $. Based on this fact, and using a simple induction, one can show that the $ n $-time iteration of $ f_\alpha $, namely the function $ f_\alpha^n $, can be approximated by the function $ f_{\alpha^n} $ up to an error which is uniformly bounded above. That is, there exists a natural number $ K $, depending on $ \alpha $ and $ n $, such that for any $ x\in\NN $ we have:
	\begin{align}\label{eq-fn}
		f_\alpha^n (x)=f_{\alpha^n}(x)-i,
	\end{align}
	where $ i $ is a natural number less than or equal to $ K $. On the other hand, Inequality \eqref{eqn-range-of-f} shows that $x$ belongs to the range of $f_{\alpha^n}$ if and only 
		\begin{align*}
		1-\frac{1}{\alpha^n}<\fp{\frac{x}{\alpha^n}}<1.
	\end{align*}
	Is it possible to use the above inequality along with \eqref{eq-fn} to find a criteria for an $x$ being in the range of $f^n_\alpha$? Our theorem shows that a careful treatment involving all the values $\fp{\frac{x}{\alpha}}, \fp{\frac{x}{\alpha^2}}, \ldots, \fp{\frac{x}{\alpha^n}}$ is necessary to answer this question.

\section{Proof of the Theorem}\label{secProof}
%

\begin{notation}
		For a function $f$, a natural number $x$, and for any $n\geq1$ we write $ x\in\mathcal{R}_{f^n} $ to denote that $ x $ belongs to the range of the function $ f^n $.
\end{notation}

We can use Inequality \eqref{eqn-range-of-f} to easily conclude the following remark.

\begin{rem}\label{rem-range-of-fn} For all irrational numbers $\beta$, all natural number $x$, and all $n\geq1$ we have that:
	\begin{align*}
		x\in\mathcal{R}_{f_{\beta}^{n+1}}&\qquad\text{iff}\qquad x\in\mathcal{R}_{f_\beta} \quad\text{ and }\quad \ip{\frac{x}{\beta }}+1\in\mathcal{R}_{f_\beta^{n}}.
	\end{align*}
	
\end{rem}
From now on, we fix an irrational number $ \alpha>\frac{3+\sqrt{5}}{2} $, and will drop the subscript $ \alpha $ from our notation; so we write $ f(x) $ instead of $ f_\alpha(x) $. Unless stated otherwise, $ x $ and $ n $ will denote natural numbers while $ n $ is assumed to be strictly positive.

\begin{dfn}\label{dfnSeqs}
	Let $v_0$ be the identity function on natural numbers, and 	$\rho_0$ the constant zero function. For each $n\geq 1$, we recursively define the following functions:
	\begin{align*}
		\begin{cases}
			v_n:&\NN\longrightarrow\NN\\
			&x\longmapsto \ip{\frac{v_{n-1}(x)}{\alpha }}+1.
		\end{cases}
		\qquad\text{ and }\qquad
		\begin{cases}
			\rho_{n}:&\NN\longrightarrow(0,\frac{1}{\alpha})\\
			&x\longmapsto \frac{1-\fp{\frac{x}{\alpha ^n}}}{\alpha }.
		\end{cases}
	\end{align*}
\end{dfn}

\vspace*{.3cm}
\begin{notation} \hfill
	\begin{itemize}
		\item[(1)] The following inequality is denoted by $ \cR{x} $\footnote{ RC stands for the ``Range Condition''.}.
		\[ 0<\rho_{n}(x)-\frac{\rho_{n-1}(x)}{\alpha }<\frac{1}{\alpha ^{2}}. \]
		
		\item[(2)] We use $ \cR[\leq n]{x} $ to denote the conjunction of the conditions $ \cR[i]{x} $ for all $ 1\leq i\leq n $.

	\end{itemize}
	
\end{notation}

By making use of the notation above, our main theorem claims that:
\[ x\in\mathcal{R}_{f^n} \quad\text{iff}\quad \cR[\leq n]{x}.\]

\vspace*{.5cm}
\begin{lem}\label{lmaVn}
	For any $ n $ and $ x $ we have 
	\begin{align*}
		\frac{v_n(x)}{\alpha}=\frac{x}{\alpha^{n+1}}+\sum_{i=1}^{n}\frac{1-\fp{\frac{v_{n-i}(x)}{\alpha}}}{\alpha^{i}}.
	\end{align*}
\end{lem}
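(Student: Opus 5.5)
Induction on $n$ is the natural approach, driven by the identity $\ip{y}=y-\fp{y}$ applied to $y=v_{n-1}(x)/\alpha$. For the base case $n=0$, the sum is empty and $v_0(x)=x$, so the statement reads $\frac{x}{\alpha}=\frac{x}{\alpha}$, which is trivial. (If one prefers to start at $n=1$, the same one-line computation below also gives $\frac{v_1(x)}{\alpha}=\frac{x}{\alpha^2}+\frac{1-\fp{\frac{x}{\alpha}}}{\alpha}$ directly.)

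For the inductive step, assume the formula for $n-1$, namely
\begin{align*}
\frac{v_{n-1}(x)}{\alpha}=\frac{x}{\alpha^{n}}+\sum_{i=1}^{n-1}\frac{1-\fp{\frac{v_{n-1-i}(x)}{\alpha}}}{\alpha^{i}}.
\end{align*}
By Definition \ref{dfnSeqs}, $v_n(x)=\ip{\frac{v_{n-1}(x)}{\alpha}}+1=\frac{v_{n-1}(x)}{\alpha}+1-\fp{\frac{v_{n-1}(x)}{\alpha}}$. Dividing by $\alpha$ gives
\begin{align*}
\frac{v_n(x)}{\alpha}=\frac{1}{\alpha}\cdot\frac{v_{n-1}(x)}{\alpha}+\frac{1-\fp{\frac{v_{n-1}(x)}{\alpha}}}{\alpha}.
\end{align*}
Substituting the inductive hypothesis into the first term turns $\frac{x}{\alpha^n}$ into $\frac{x}{\alpha^{n+1}}$. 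Each summand with index $i$ becomes a summand with denominator $\alpha^{i+1}$ and numerator $1-\fp{\frac{v_{n-1-i}(x)}{\alpha}}=1-\fp{\frac{v_{n-(i+1)}(x)}{\alpha}}$. Reindexing with $j=i+1$, these give exactly the terms $j=2,\dots,n$ of the target sum. The remaining term $\frac{1-\fp{v_{n-1}(x)/\alpha}}{\alpha}$ is precisely the $j=1$ term. This closes the induction.

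No step is a real obstacle. The only care needed is the index bookkeeping in the reindexing, specifically checking that $v_{n-1-i}$ matches $v_{n-j}$ for $j=i+1$. The argument never uses irrationality of $\alpha$ or the bound $\alpha>\frac{3+\sqrt5}{2}$; it holds for any $\alpha>0$.
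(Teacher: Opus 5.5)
Your proof is correct and is the same argument as the paper's, which gives only "by a simple induction on $n$". You fill in exactly that induction: write $\ip{y}=y-\fp{y}$ for $y=v_{n-1}(x)/\alpha$, divide by $\alpha$, and reindex. Starting at $n=0$ is harmless, since the identity is trivial there and your inductive step covers $n=1$.
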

\begin{proof}
	By a simple induction on $n$.
\end{proof}

\begin{rem}
	Note, in particular, that if $ x $ belongs to the range of $ f^{n} $, then $f^{-n}(x)=v_n(x)$. 
	Moreover, using the definitions, we have that: 
	\[ v_1(x)=\ip{\frac{x}{\alpha}}-\alpha\rho_1(x). \]
	Also, the equation appearing in Lemma \ref{lmaVn} can be recast as:
	\[ v_n(x)=\frac{x}{\alpha^{n}}+\sum_{i=1}^{n}\frac{\rho_1(v_{n-i}(x))}{\alpha^{i-2}}. \]
	Intuitively speaking, the values of $ \rho_{n}(x) $ are sort of a remainder which, combined with different powers of $ \alpha $, captures the difference between the values of $ f^{-n}(x)$ and $ \ip{\frac{x}{\alpha^n}} $.  Whenever $ x $ belongs to the range of $ f_{\alpha^n} $, the latter value would be equal to $ f^{-1}_{\alpha^n}(x)-1 $. This can be seen using Inequality \eqref{eqn-range-of-f} applied for $\beta=\alpha^n$. 
\end{rem}

%
%
%
%

\begin{lem}[Technical Lemma]\label{lmaTech}\hfill
	
	Suppose that $ n\geq 2 $ and the condition $ \cR[<n]{x} $ holds. Then, either of the conditions (i) or (ii) below implies that:
	\begin{align}\label{eq-lmaTech}
		\rho_{n-1}(v_1(x))-\frac{\rho_{n-2}(v_1(x))}{\alpha }=\rho_{n}(x)-\frac{\rho_{n-1}(x)}{\alpha }.
	\end{align}
	
	\begin{itemize}
		\item[(i)] $ \cR[n]{x} $.
		\item[(ii)] $ x\in\mathcal{R}_{f^{n}} $.
	\end{itemize}
\end{lem}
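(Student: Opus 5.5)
The plan is to express $\fp{\frac{v_1(x)}{\alpha^{k}}}$ in terms of $\fp{\frac{x}{\alpha^{k+1}}}$ and $\rho_1(x)$. Then both sides of \eqref{eq-lmaTech} differ only by "wrap-around" indicators. The proof reduces to showing that no wrap-around occurs at the two relevant levels.

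\textbf{Step 1: the basic identity.} Since $v_1(x)=\ip{\frac{x}{\alpha}}+1=\frac{x}{\alpha}+1-\fp{\frac{x}{\alpha}}=\frac{x}{\alpha}+\alpha\rho_1(x)$, for every $k\geq 0$ we get
\[ \frac{v_1(x)}{\alpha^k}=\frac{x}{\alpha^{k+1}}+\frac{\rho_1(x)}{\alpha^{k-1}}. \]
Define $\varepsilon_k\in\{0,1\}$ to be $1$ exactly when $\fp{\frac{x}{\alpha^{k+1}}}+\frac{\rho_1(x)}{\alpha^{k-1}}\geq 1$. Since $\rho_1(x)<1/\alpha$, this sum is less than $2$, so
\[ \rho_k(v_1(x))=\rho_{k+1}(x)-\frac{\rho_1(x)}{\alpha^k}+\frac{\varepsilon_k}{\alpha}. \]
For $k=0$ both sides are $0$, because $\rho_0\equiv 0$ and $\rho_1(x)-\rho_1(x)=0$; we set $\varepsilon_0=0$. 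Substituting with $k=n-1$ and $k=n-2$, the $\rho_1$ terms cancel: $-\frac{\rho_1}{\alpha^{n-1}}+\frac{1}{\alpha}\cdot\frac{\rho_1}{\alpha^{n-2}}=0$. Hence
\[ \rho_{n-1}(v_1(x))-\frac{\rho_{n-2}(v_1(x))}{\alpha}=\rho_n(x)-\frac{\rho_{n-1}(x)}{\alpha}+\frac{\varepsilon_{n-1}}{\alpha}-\frac{\varepsilon_{n-2}}{\alpha^2}. \]
So it suffices to prove $\varepsilon_{n-1}=\varepsilon_{n-2}=0$.

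\textbf{Step 2: reformulating "no wrap".} Since $\fp{\frac{x}{\alpha^{k+1}}}=1-\alpha\rho_{k+1}(x)$, the condition $\varepsilon_k=0$ is equivalent to $\rho_{k+1}(x)>\frac{\rho_1(x)}{\alpha^k}$. By irrationality of $\alpha$ no equality case arises. The lower halves of $\cR[i]{x}$ for $2\le i\le n-1$ give $\rho_i(x)>\rho_{i-1}(x)/\alpha$. Chaining these yields $\rho_{k+1}(x)>\rho_1(x)/\alpha^k$ for all $1\le k\le n-2$, so $\varepsilon_{n-2}=0$; when $n=2$ this holds by convention. Under (i), the lower half of $\cR[n]{x}$ extends the chain one more step, giving $\varepsilon_{n-1}=0$.

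\textbf{Step 3: case (ii), the main obstacle.} Here the range condition at level $n$ is not available, so I argue directly by monotonicity of $f$. Suppose $\varepsilon_{n-1}=1$. This means $\ip{\frac{v_1(x)}{\alpha^{n-1}}}>\ip{\frac{x}{\alpha^n}}$, so there is an integer $m$ with $\frac{x}{\alpha^n}<m<\frac{v_1(x)}{\alpha^{n-1}}$.

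By Remark~\ref{rem-range-of-fn}, $x\in\mathcal{R}_f$ with $f^{-1}(x)=v_1(x)$, and $v_1(x)=f^{n-1}(z)$ for some $z$, so $x=f^n(z)$. Then
\[ f^{n-1}(m)\le\alpha^{n-1}m<v_1(x)=f^{n-1}(z), \]
and monotonicity of $f$ gives $m\le z-1$.

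On the other hand, $f(y)>\alpha y-1$, and iterating gives $f^n(y)>\alpha^n y-(1+\alpha+\dots+\alpha^{n-1})$. Therefore
\[ x=f^n(z)\ge f^n(m+1)>\alpha^n(m+1)-\sum_{i<n}\alpha^i\ge\alpha^n m, \]
where the last inequality holds because $\alpha^n\ge\sum_{i<n}\alpha^i$ for $\alpha\ge 2$. This contradicts $x<\alpha^n m$. Hence $\varepsilon_{n-1}=0$, which completes the proof.
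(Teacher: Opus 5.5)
Your proof is correct. Its skeleton matches the paper's: the paper's Claim $\fp{\frac{v_1(x)}{\alpha^{n-1}}}=\fp{\frac{x}{\alpha^{n}}}+\frac{\rho_1(x)}{\alpha^{n-2}}$, used at levels $n$ and $n-1$, is exactly your $\varepsilon_{n-1}=\varepsilon_{n-2}=0$, and the final cancellation of the $\rho_1$ terms is the same.

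The difference is in how you establish the no-wrap statement. The paper proves its Claim by induction on $n$:
\begin{itemize}
\item under (i) it invokes the Claim at level $n-1$ and extracts a contradiction from $\cR{x}$ using an auxiliary integer $\ell\le 0$ and a fractional part;
\item under (ii) it expands $x/\alpha^n$ with Lemma~\ref{lmaVn}, bounds each $\fp{v_i(x)/\alpha}$ via Remark~\ref{rem-range-of-fn}, and closes with the polynomial inequalities $\alpha^2-\alpha-1>0$ (for $n=2$) and $\alpha^4-2\alpha^3-\alpha+1>0$, $\alpha>2$ (for $n\ge 3$).
\end{itemize}
You argue differently in both cases:
\begin{itemize}
\item In case (i), you observe that no wrap at level $k$ is exactly the inequality $\rho_{k+1}(x)>\rho_1(x)/\alpha^{k}$. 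This follows by chaining the lower halves of $\cR[2]{x},\dots,\cR[k+1]{x}$, with no induction and nothing about $\alpha$ beyond $\alpha>1$.
\item In case (ii), you argue directly from $x=f^n(z)$. Using monotonicity of $f$ and the crude bounds $\alpha^j y-\sum_{i<j}\alpha^i<f^j(y)\le\alpha^j y$, you show that no integer can separate $x/\alpha^n$ from $v_1(x)/\alpha^{n-1}=f^{n-1}(z)/\alpha^{n-1}$. This needs only $\alpha\ge 2$.
\end{itemize}

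Your route is shorter and makes visible that only the lower halves of the range conditions matter here. It also proves the lemma for every irrational $\alpha>2$. Since the proof of the Theorem uses the hypothesis on $\alpha$ only through this lemma, your argument extends the Theorem to that range, which bears directly on the paper's first Question. The paper's route stays inside the fractional-part bookkeeping of the $v_i$ supplied by Lemma~\ref{lmaVn}, at the price of a stronger restriction on $\alpha$.

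One small point: the strictness of $m<v_1(x)/\alpha^{n-1}$ in your Step 3 tacitly assumes $\alpha^{n-1}$ is irrational. This is harmless, because $f(m)<\alpha m$ already gives $f^{n-1}(m)<\alpha^{n-1}m$. The paper makes the same tacit assumption when it places $\fp{x/\alpha^{n}}$ in $(0,1)$.
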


\begin{proof}
	The following equality is evident using the definitions of $ v_1(x)  $ and $ \rho_1(x) $:
	\begin{align}\label{eqGeneral}
		\frac{v_1(x)}{\alpha ^{n-1}}=\frac{x}{\alpha ^{n}}+\frac{\rho_1(x)}{\alpha ^{n-2}}.
	\end{align}
	
	\begin{claim}
	Either of the conditions (i) or (ii) implies that:
	\begin{align}\label{eqCalim}
		\fp{\frac{v_1(x)}{\alpha ^{n-1}}}=\fp{\frac{x}{\alpha ^{n}}}+\frac{\rho_1(x)}{\alpha ^{n-2}}. 
	\end{align}
\end{claim}

\textit{Proof of the Claim.} Note that $ \alpha>1 $, and the values of $ \fp{\frac{x}{\alpha ^{n}}} $ and $ \frac{\rho_1(x)}{\alpha ^{n-2}} $ belong to the intervals $ (0,1) $ and $ (0,\frac{1}{\alpha^{n-1}}) $ respectively. Therefore, if the claim fails to hold, by Equation (\ref{eqGeneral}), there only remains one possibility for these values:
\begin{align}\label{eqCalimFails}
	\fp{\frac{v_1(x)}{\alpha ^{n-1}}}=\fp{\frac{x}{\alpha ^{n}}}+\frac{\rho_1(x)}{\alpha ^{n-2}}-1.
\end{align}
At the same time, the condition $ \cR[1]{x} $ implies that $ \rho_1(x)\in(0,\frac{1}{\alpha ^{2}}) $, which leads to having: 
\begin{align}\label{eqRho1}
	0<\frac{\rho_1(x)}{\alpha ^{n-2}}<\frac{1}{\alpha^{n}}.
\end{align}

Now, we proceed by induction on $ n $ to show that (\ref{eqCalimFails}) is contradictory for each $ n $.

For $ n=2 $, assume (i), that is $ \cR[2]{x} $, then, using (\ref{eqCalimFails}) we immediately have that $ \fp{\frac{v_1(x)}{\alpha ^{n-1}}}< 0 $, which is impossible for the fractional part of a real number. 

If we assume (ii), by Remark \ref{rem-range-of-fn}, we have that: 
\[ 1-\frac{1}{\alpha}<\fp{\frac{v_1(x)}{\alpha}}<1. \]
This, together with (\ref{eqCalimFails}) and (\ref{eqRho1}), shows that $\fp{\frac{x}{\alpha^2}}$ is strictly greater than $ 2-(\frac{1}{\alpha}+\frac{1}{\alpha^2}) $. But, the latter value is strictly greater than 1, as $ \alpha>\frac{3+\sqrt{5}}{2} $ implies that $\alpha$ is strictly greater than the roots of the polynomial $\alpha^2-\alpha-1$. That is, $ \fp{\frac{x}{\alpha^2}}>2-(\frac{1}{\alpha}+\frac{1}{\alpha^2})>1 $, which is impossible for the fractional part of a real number.

Now, suppose that $ n>2 $ and assume (i). By the induction hypothesis, (\ref{eqCalim}) holds for $ n-1 $. This equation can be rewritten as: 
\[ \frac{v_1(x)}{\alpha ^{n-2}}- \ip{\frac{v_1(x)}{\alpha ^{n-2}}}=\fp{\frac{x}{\alpha ^{n-1}}}+\frac{\rho_1(x)}{\alpha ^{n-3}}. \]
For better readability, let $ \gamma $ denote $ \frac{v_1(x)}{\alpha ^{n-2}} $, and observe that the equality above, when divided by $ \alpha $, is equivalent to: 
\[ \frac{\gamma}{\alpha}-\frac{\ip{\gamma}+1-1}{\alpha}=\frac{\fp{\frac{x}{\alpha ^{n-1}}}}{\alpha}+\frac{\rho_1(x)}{\alpha ^{n-2}}. \]
By replacing $ \frac{\gamma}{\alpha} $ with $ \ip{\frac{\gamma}{\alpha}} + \fp{\frac{\gamma}{\alpha}} $, and $ \frac{\ip{\gamma}+1}{\alpha} $  with $ \ip{\frac{\ip{\gamma}+1}{\alpha}} + \fp{\frac{\ip{\gamma}+1}{\alpha}} $, we obtain
\[ \ip{\frac{\gamma}{\alpha}}-\ip{\frac{\ip{\gamma}+1}{\alpha}}+\fp{\frac{\gamma}{\alpha}}+\frac{1}{\alpha}-\frac{\fp{\frac{x}{\alpha ^{n-1}}}}{\alpha}-\frac{\rho_1(x)}{\alpha ^{n-2}}=  \fp{\frac{\ip{\gamma}+1}{\alpha}}. \]

It is easy to see that $ \ip{\frac{\gamma}{\alpha}}-\ip{\frac{\ip{\gamma}+1}{\alpha}} $	is an integer less that or equal to zero; let $ \ell $ denote this integer. Also, let $ \varepsilon $ denote the real number $ \fp{\frac{\ip{\gamma}+1}{\alpha}} $ which by definition belongs to the interval $ (0,1) $.

We can use (\ref{eqGeneral}) and the fact that $ \gamma=\frac{v_1(x)}{\alpha ^{n-2}} $ to replace $ \fp{\frac{\gamma}{\alpha}} $ with its original value. Then, by the definition of $ \rho_{n-1}(x) $ and by doing some elementary calculations we will have that:
\begin{align}\label{eqEllEpsilon}
	\ell-\varepsilon = 1-\fp{\frac{x}{\alpha^{n}}}-\rho_{n-1}(x).
\end{align}

The condition $ \cR{x} $ is equivalent to: 
\[ 0<1-\fp{\frac{x}{\alpha^{n}}}-\rho_{n-1}(x)<\frac{1}{\alpha}, \]
which implies that:
\[ \ell-\frac{1}{\alpha}<\varepsilon<\ell\leq 0. \]
However, this cannot happen for $ \varepsilon $ as the fractional part of a real number, which must be positive. 

Now, assume (ii), and note that 
by Lemma \ref{lmaVn} we have that: 
\begin{align}\label{eqSum}
	\frac{x}{\alpha^{n}} = -\frac{v_{n-1}(x)}{\alpha}+\sum_{i=1}^{n-1}\frac{1-\fp{\frac{v_{n-1-i}(x)}{\alpha}}}{\alpha^{i}}.	
\end{align}
Let $ \delta $ denote the summation appeared above. Since $ x $ belongs to the range of $ f^n $, by Remark \ref{rem-range-of-fn}, for each $ 1\leq i\leq n-1 $ we have that 
\[ 1-\frac{1}{\alpha}<\fp{\frac{v_{i}(x)}{\alpha}}<1. \]
This implies that: 
\[ 0<\delta<\sum_{i=2}^{n}\frac{1}{\alpha^i}. \]
In particular, we have that: 
\begin{align}\label{eqDeltaVn-1}
	-1<\delta-\fp{\frac{v_{n-1}(x)}{\alpha}}<-1+\sum_{i=1}^{n}\frac{1}{\alpha^i}.
\end{align}
Now, note that the summation $ \sum_{i=1}^{n}\frac{1}{\alpha^i} $ is strictly less than the sum of the infinite series $ \sum_{i=1}^{\infty}\frac{1}{\alpha^i} $ which exists and is equal to $ \frac{1}{\alpha-1} $. Since $ \alpha>2 $, the value of $ \sum_{i=1}^{n}\frac{1}{\alpha^i}-1 $ is strictly less than $ 0 $. Therefore, the fractional part of the number $ \delta-\frac{v_{n-1}(x)}{\alpha} $ is equal to:
\[ \delta-\fp{\frac{v_{n-1}(x)}{\alpha}}+1. \]
Hence, by (\ref{eqSum}), we have that: 
\[ \fp{\frac{x}{\alpha^{n}}}=\delta-\fp{\frac{v_{n-1}(x)}{\alpha}}+1, \]
which, using (\ref{eqCalimFails}), implies that: 
\[ \fp{\frac{v_1(x)}{\alpha ^{n-1}}}=\delta-\fp{\frac{v_{n-1}(x)}{\alpha}}+\frac{\rho_1(x)}{\alpha ^{n-2}}. \]
Using the Inequalities (\ref{eqRho1}) and (\ref{eqDeltaVn-1}) we have that: 
\[ \fp{\frac{v_1(x)}{\alpha ^{n-1}}}<-1+\sum_{i=1}^{n}\frac{1}{\alpha^i} + \frac{1}{\alpha^n}. \]
However, the right-hand side of the latter inequality is strictly less than 
\[ -1+\sum_{i=1}^{\infty}\frac{1}{\alpha^i} + \frac{1}{\alpha^3},  \] 
or equivalently, it is less than  
$ -1+\frac{1}{\alpha-1}+\frac{1}{\alpha^3} $. The latter value is strictly negative, as the roots of the polynomial $\alpha^4-2\alpha^3-\alpha +1$ are strictly less than $\frac{3+\sqrt{5}}{2}$. Hence, the fractional part of $ \frac{v_1(x)}{\alpha ^{n-1}} $ would be negative, which is impossible.

\hfill \scalebox{.8}{$ \blacksquare_{Claim} $}

The claim above directly proves the case of $ n=2 $. For $ n\geq 3 $, by definitions, we have that:
\[ \rho_{n-1}(v_1(x))-\frac{\rho_{n-2}(v_1(x))}{\alpha }=\frac{1}{\alpha }-\frac{1}{\alpha ^{2}}-\frac{\fp{\frac{v_1(x)}{\alpha ^{n}}}}{\alpha }+\frac{\fp{\frac{v_1(x)}{\alpha ^{n-1}}}}{\alpha ^{2}}. \]

Now, if either of the conditions (i) or (ii) holds for $ n $, the same condition will automatically hold for $ n-1 $. Hence, we can apply the claim above  for $ n $ and $ n-1 $ simultaneously to recalculate  the right-hand side of the equation above, to obtain 	
\[ \frac{1}{\alpha }-\frac{1}{\alpha ^{2}}-\frac{\fp{\frac{x}{\alpha ^{n}}}}{\alpha }+\frac{\fp{\frac{x}{\alpha ^{n-1}}}}{\alpha ^{2}}, \]
which would be the same as $ \rho_n(x)-\frac{\rho_{n-1}(x)}{\alpha }. $
\end{proof}


Note that Equation \eqref{eq-lmaTech} in the lemme above enables us to omit $ v_1(x) $ from the left of the equation just by increasing the indices of the functions $ \rho $ on the right-hand side of the equation.
\subsection{Proof of the Theorem}
	We proceed by induction on $ n. $ The case of $ n=1 $ directly follows from Inequality \eqref{eqn-range-of-f}. For $ n\geq 2 $, first suppose that $ x $ is in the range of $ f^{n} $. By induction hypothesis, we know that the condition $ \cR[<n]{x} $ holds. Hence, it suffices to prove $ \cR{x} $. By Remark \ref{rem-range-of-fn}, we know that $ v_1(x) $ belongs to the range of $ f^{n-1} $. By applying the induction hypothesis for $ v_1(x) $, it is implied that the condition $ \cR[n-1]{v_1(x)} $ holds for $v_1(x)$. Namely, we have the following inequality:
	\[ 0<\rho_{n-1}(v_1(x))-\frac{\rho_{n-2}(v_1(x))}{\alpha }<\frac{1}{\alpha ^{2}}. \] 
	On the other hand, by Lemma \ref{lmaTech}, we have that 
	\[ \rho_n(x)-\frac{\rho_{n-1}(x)}{\alpha } =\rho_{n-1}(v_1(x))-\frac{\rho_{n-2}(v_1(x))}{\alpha }, \]
	which, using the inequality above, proves $ \cR[n]{x}. $
	
	Now, suppose that $ \cR[\leq n]{x} $ holds. Based on Remark \ref{rem-range-of-fn}, we only need to show that $ x $ and $ v_1(x) $ belong respectively to the range of $ f $ and $ f^{n-1} $. By the induction hypothesis, $ x $ belongs to the range of $ f^{n-1} $, which, by Remark \ref{rem-range-of-fn}, implies that $ v_1(x) $ is in the range of $ f^{n-2} $. By applying the induction hypothesis for $ v_1(x) $, we conclude that the condition $ \cR[\leq {n-2}]{v_1(x)} $ holds. Hence, we only need to prove $ \cR[n-1]{v_1(x)} $. But, by $ \cR{x} $, we have that: 
	\[ 0<\rho_{n}(x)-\frac{\rho_{n-1}(x)}{\alpha }<\frac{1}{\alpha ^{2}}, \] 
	which, using Lemma \ref{lmaTech}, implies the desired condition $ \cR[n-1]{v_1(x)} $. \hfill\qed

\begin{rem}
	
	In view of the results appeared in \cite{KhaniZare-Wythoff}, \cite{KhaniValiZare-ModelCompletenessBeatty}, and [27] if $ \alpha $ happens to be an algebraic number of degree $ m $, then the conditions \eqref{eq-CRn} in the Theorem reduce to inequalities which only involve the powers of $ \alpha $ strictly less than $ m $. This observation goes parallel with the fact that in the logical and model theoretic setting that studies the additive group of integers while allowing the function $ f_\alpha $ to be in the language, all first-order terms reduce to linear combinations of iterates of $ f_\alpha $ up to the power of $m-1$: $f_\alpha,f^2_\alpha, \ldots, f^{m-1}_\alpha $. This fact has important consequences from a model-theoretic perspective since it provides a possibility for having quantifier-elimination in relatively a small language. One such impact can be seen in \cite{KhaniZare-Wythoff} and [27] where  $\alpha$ is assumed to be the golden ratio as a typical quadratic number. This is basically in contrast to the case of a transcendental number $ \alpha $ where coping with all the powers of $ \alpha $ seems unavoidable in treating the iterates of $ f_\alpha $ as well as the complexity of formulas (see \cite{KhaniValiZare-ModelCompletenessBeatty}).
\end{rem}
We close by asking two questions:
\begin{qst}
	Our assumption on $\alpha$ being strictly greater than $\frac{3+\sqrt{5}}{2}\approx2.618$ implies that it satisfies the algebraic inequalities below:
	\[ \alpha^{2}-3\alpha+1>0 \quad\text{ and }\quad \alpha^{4}-2\alpha^{3}-\alpha+1>0. \]
	In fact, the greatest root of the polynomials above is approximately $ 2.618 $ and $ 2.118 $, respectively. As we observed, these inequalities are crucial to prove Lemma \ref{lmaTech}. A natural question is whether our theorem holds also for irrationals belonging to the following set:
	\[ (1, 2.618\hspace*{-3pt}\sim)=(1, 2.118\hspace*{-3pt}\sim)\cup(2.118\hspace*{-3pt}\sim, 2.618\hspace*{-3pt}\sim). \]
	
In this regard, recall that the function $ f_\alpha $ is surjective whenever $ \alpha $ is less than or equal to one.
	
\end{qst}


\begin{qst}
	Is it possible to generalize the result of this paper to iteration of Beatty sequences with a distinct set of moduli? In other words, are there similar conditions for iterations of the form $ (f_\alpha\circ f_\beta)^n $ or $ (f_\beta\circ f_\alpha)^n $ where $ \alpha $ and $ \beta $ are taken to be distinct irrational numbers? It seems plausible that possible algebraic dependencies that can occur between $\alpha$ and $\beta$ will play a role in finding such conditions.
\end{qst}



\providecommand{\bysame}{\leavevmode ---\ }
\providecommand{\og}{``}
\providecommand{\fg}{''}
\providecommand{\smfandname}{and}
\providecommand{\smfedsname}{\'eds.}
\providecommand{\smfedname}{\'ed.}
\providecommand{\smfmastersthesisname}{M\'emoire}
\providecommand{\smfphdthesisname}{Th\`ese}

\end{document}